\newtheorem{theorem}{Theorem}[section]
\newtheorem{lemma}[theorem]{Lemma}
\theoremstyle{definition}
\newtheorem{definition}[theorem]{Definition}
\theoremstyle{remark}
\newtheorem{remark}[theorem]{Remark}
\numberwithin{equation}{section}
\begin{document}

\title{Euler evolution of a concentrated vortex in planar bounded domains}

%    Remove any unused author tags.

%    author one information
\author{Daomin Cao, Guodong Wang}
\address{}
\curraddr{}
\email{}
\thanks{}

\subjclass[]{}

\keywords{}

\date{}

\dedicatory{}

\begin{abstract}
In this paper, we consider the time evolution of an ideal fluid in a planar bounded domain. We prove that if the initial vorticity is supported in a sufficiently small region with diameter $\varepsilon$, then the time evolved vorticity is also supported in a small region with diameter $d$, $d\leq C\varepsilon^{\alpha}$ for any $\alpha<\frac{1}{3}$, and the center of the vorticity tends to the point vortex, the motion of which is described by the Kirchhoff-Routh equation.
\end{abstract}

\maketitle

\section{Introduction}
The motion of an incompressible inviscid fluid in the plane is governed by the well-known two-dimensional Euler fluid dynamical equation. When the vorticity consists of a sum of $k$ point vortices, the motion is described by the point vortex model, a Hamiltonian dynamical system called the Kirchhoff-Routh equation with the Kirchhoff-Routh function as the Hamiltonian, see \cite{L} for a general discussion. A natural problem is the connection between the Euler equation and the vortex model. More specifically, we ask the following questions: suppose that at time zero the initial vorticity is sufficiently concentrated in $k$ small regions in some sense,

(a) does the evolved vorticity of the Euler equation remain concentrated?

(b) if so, does the concentration point satisfy the Kirchhoff-Routh equation?

The problem is called desingularization of point vortices. In this paper, we answer the above two questions in the case of bounded domains. We prove that, for initial vorticity supported in a sufficiently small region, then the time evolved vorticity is also supported in a sufficiently small region, and the center of the vorticity converges to the solution of the Kirchhoff-Routh equation uniformly in any finite time interval.

Several significant results have been obtained in this respect. The case for short time and $k$ vortices without sign condition was proved in \cite{MP1}; the case for any time and a single vortex in bounded domains was proved in \cite{T}; the case for any time and two vortices with opposite signs in bounded domains was proved in \cite{MPa}; the case for any time and $k$ vortices with the same signs was proved in \cite{MP2}; the case for any time and $k$ vortices without sign condition was proved in \cite{MP3}.

Our result and method are closely related to \cite{T} and \cite{MP3}. In \cite{T}, Turkington partially solved the desingularization problem. He proved that starting with a blob vorticity concentrating at some given point, then most part of the evolved vorticity remains concentrated, and the center of the vorticity converges to the solution of the Kirchhoff-Routh equation. But the concentration there is in the sense of distribution, and whether the support of the evolved vorticity shrinks to a point is unknown. In fact, the method used in \cite{T} is based on energy argument, which is too rough to be used to analyze the location of the support. In \cite{MP3}, Marchioro and Pulvirenti  developed a new method to deal with this problem. Roughly speaking, they showed that, for a single blob of vorticity moving in an external force in all of $\mathbb{R}^2$, initially supported in a small region of diameter $\varepsilon$, the radial velocity that takes the fluid particles away from the center of the vorticity vanishes as $\varepsilon\rightarrow 0$, so the fluid particles on the support of the vorticity must be contained in a small disk with radius $d(\varepsilon)$, $d(\varepsilon)\rightarrow0$ as $\varepsilon\rightarrow0$, in any finite time interval. Based the same idea, Marchioro in \cite{M} improved the result in \cite{MP3} by giving a better estimate on the size of the support of the vorticity under a weaker assumption on the initial data.

In this paper, we improve the result in \cite{T}, showing that if the initial vorticity is supported in a sufficiently small region with diameter $\varepsilon$, then the time evolved vorticity is also supported in a small region, the diameter of which vanishes as $\varepsilon\rightarrow0$, and the center of the vorticity tends to a point, the motion of which is described by the Kirchhoff-Routh equation.
 The basic idea to prove our result is mostly inspired by \cite{MP3}, where the motion of $k$ concentrated vortices in all $\mathbb{R}^2$ is considered.  The idea is as follows. Firstly we introduce a regularized system, in which case the external force is smooth and bounded even if the support of the vorticity approaches the boundary. Then we repeat the argument in \cite{MP3}(or\cite{M}) for the regularized system, and find that the vorticity in the regularized system is supported in a sufficiently small disk, the center of which satisfies the Kirchhoff-Routh equation. But a single point vortex will never touch the boundary of the domain, so the support of the vorticity for the regularized system is away from the boundary if $\varepsilon$ is sufficiently small, in which case the vorticity of the regularized system coincides with the one of the Euler equation, which concludes the proof.

Our method of dealing with the boundary effect can also be used to study the evolution of $k$ vortices, which will be discussed briefly in Section 4..

\section{Main result}
Firstly we introduce some notations for later use.
Let $D\subset \mathbb R^2$ be a bounded domain with smooth boundary, and $G$ be the Green function for $-\Delta$ in $D$ with zero
Dirichlet boundary condition, which can be written as

\begin{equation}
G(x,y)=\Gamma(x,y)-h(x,y), \,\,\,x,y\in
D,
\end{equation}
where $\Gamma(x,y)=-\frac{1}{2\pi}\ln |x-y|$, $h(x,y)$ is the regular part of $G$. Also, we define the Kirchhoff-Routh function for $N=1$ to be

\begin{equation}
H(x)=\frac{1}{2}h(x,x), \,\,\,x\in
D.
\end{equation}

Throughout this paper, $B_r(x)$ denotes the open disk of center $x$ and radius $r$, $dist(\cdot,\cdot)$ denotes the distance of two points or sets, $D_\rho$ is an open subset of $D$ defined by $D_\rho\triangleq \{x\in D|dist(x,\partial D)>\rho\}$, $|A|$ denotes the Lebesgue measure for some measurable set $A\subset \mathbb{R}^2$, $supp f$ denotes the support of some function $f$, and $J(v_1,v_2)=(v_2,-v_1)$ denotes clockwise rotation through $\frac{\pi}{2}$ for any vector $(v_1,v_2)\in \mathbb{R}^2$.

Consider the motion of an ideal fluid in $D$, which is governed by the following Euler equation:

\begin{equation}\label{1}
\begin{cases}
  \partial_t\mathbf{v}(x,t)+(\mathbf{v}\cdot\nabla)\mathbf{v}(x,t)=-\nabla P(x,t) \,\,\,\,\,\,\,\,\,\,\text{in $D\times(0,+\infty)$},\\
  \nabla\cdot\mathbf{v}(x,t)=0 \,\,\,\,\,\,\,\,\,\,\,\,\,\,\,\,\,\,\,\,\,\,\,\,\,\,\,\,\,\,\,\,\,\,\,\,\,\,\,\,\,\,\,\,\,\,\,\,\,\,\,\,\,\,\,\,\,\,\,\,\,\,\,\,\,\,\,\,\,\,\,\,\text{in $D\times(0,+\infty)$},\\
 \mathbf{v}(x,0)=\mathbf{v}_0(x)\,\,\,\,\,\,\,\,\,\,\,\,\,\,\,\,\,\,\,\,\,\,\,\,\,\,\,\,\,\,\,\,\,\,\,\,\,\,\,\,\,\,\,\,\,\,\,\,\,\,\,\,\,\,\,\,\,\,\,\,\,\,\,\,\,\,\,\,\,\text{in $D$},
 \\ \mathbf{v}(x,t)\cdot \vec{n}(x)=0 \,\,\,\,\,\,\,\,\,\,\,\,\,\,\,\,\,\,\,\,\,\,\,\,\,\,\,\,\,\,\,\,\,\,\,\,\,\,\,\,\,\,\,\,\,\,\,\,\,\,\,\,\,\,\,\,\,\,\,\,\,\,\,\,\,\text{on $\partial D\times(0,+\infty)$},
\end{cases}
\end{equation}
where $\mathbf{v}=(v_1,v_2)$ is the velocity field, $P$ is the pressure, $\mathbf{v}_0$ is the initial velocity field and $\vec{n}$ is the outward unit normal of $\partial D$. Here we assume that the fluid density is one, and impose the impermeability boundary condition.

To give the vorticity form of \eqref{1}, we calculate formally in the following. By introducing the vorticity $\omega = \partial_1 v_2-\partial_2v_1$ and using the identity $\frac{1}{2}\nabla|\mathbf{v}|^2=(\mathbf{v}\cdot\nabla)\mathbf{v}+J\mathbf{v}\omega$,
the first equation of $\eqref{1}$ becomes

\begin{equation}\label{2}
 \partial_t\mathbf{v}+\nabla(\frac{1}{2}|\mathbf{v}|^2+P)-J\mathbf{v}\omega=0.
\end{equation}
Taking the curl in $\eqref{2}$ we have

\begin{equation}\label{3}
\partial_t\omega(x,t)+(\mathbf{v}\cdot\nabla)\omega(x,t)=0 \,\,\,\,\,\,\,\,\,\,\,\,\,\,\,\,\text{in $D\times(0,+\infty)$},
\end{equation}
which means that the vorticity moves along the trajectory of the fluid particle. \eqref{3} is a transport equation, the solution of which can be written as

 \begin{equation}\label{21}
   \omega(x,t)=\omega(\Phi_{-t}(x),0)
 \end{equation}
where $\Phi_{t}(x)$ is the position of the fluid particle at time $t$ with initial position $x$, defined by

\begin{equation}\label{22}
\begin{cases}
 \frac{d}{dt}\Phi_t(x)=\mathbf{v}(\Phi_t(x),t),
 \\ \Phi_0(x)=x.
\end{cases}
\end{equation}
Since $\mathbf{v}$ is divergence-free and $\mathbf{v}\cdot \vec{n}=0$ on $\partial D$, $\mathbf{v}$ can be expressed in terms of $\omega$

\begin{equation}\label{23}
\mathbf{v}=J\nabla G\omega
\end{equation}
 where $G\omega(x,t)=\int_DG(x,y)\omega(y,t)dy$. From \eqref{21},\eqref{22} and \eqref{23} we can define the weak solution to the Euler equation as follows.

 \begin{definition} Let $\omega_0\in L^\infty(D)$. We call the map $t\rightarrow (\omega(\cdot,t),\mathbf{v}(\cdot,t),\Phi_t(\cdot))$ a weak solution of the Euler solution with initial vorticity $\omega_0$ if we have

 \begin{equation}\label{101}
 \omega\in L^\infty((0,+\infty),L^\infty(D)),\,\,\,\,\,\,\,\,\,  \mathbf{v}=G\omega\in C([0,+\infty)\times D),
 \end{equation}

 \begin{equation}\label{102}
 \omega(x,t)=\omega_0(\Phi_{-t}(x)),
\end{equation}

 \begin{equation}\label{103}
 \frac{d}{dt}{\Phi}_t(x)={\mathbf{v}}({\Phi}_t(x),t),\,\,\,\,\,\ {\Phi}_0(x)=x,\,\,\,\forall x\in D.
 \end{equation}
\end{definition}

It is well known that for any $\omega_0\in L^\infty(D)$, there is a unique weak solution $(\omega(x,t),\mathbf{v}(x,t),\Phi_t(x))$ satisfying \eqref{101}, \eqref{102} and \eqref{103}. Moreover, for all $t\geq0$, $\Phi_t$ is a homeomorphism from $D$ to $D$ which preserves Lebesgue measure and the distributional function of $\omega(\cdot,t)$ does not change with time, that is, for any $a\in \mathbb{R}$ and $t\geq 0$, we have
\begin{equation}\label{25}
|\{x\in D|\omega(x,t)>a\}|=|\{x\in D|\omega(x,0)>a\}|.
\end{equation}
Many proofs of this existence and uniqueness result can be found in the literature, see\cite{MP4} or \cite{Y} for example.

Now we consider a family of initial data $\omega^\varepsilon(x,0)\in L^\infty(D)$ satisfying
\begin{equation}\label{26}
\int_D\omega^\varepsilon(x,0)dx=1,
\end{equation}
\begin{equation}\label{27}
0\leq\omega^\varepsilon(x,0)\leq M\varepsilon^{-\eta},
\end{equation}
\begin{equation}\label{28}
supp\omega^\varepsilon(x,0)\subset B_\varepsilon(z_0),
\end{equation}
where $M$ and $\eta$ are fixed positive numbers and $z_0\in D$ is given.

For fixed $\varepsilon$, $\omega^\varepsilon(x,0)\in L^\infty(D)$, thus there exists a unique weak solution $\omega^\varepsilon(x,t)$, the time evolution of $\omega^\varepsilon(x,0)$ according to the Euler equation. Since the support of initial vorticity shrinks to a given point $z_0$ as $\varepsilon\rightarrow 0$, we ask whether the support of $\omega^\varepsilon(x,t)$ shrinks to a point; if it does, which point it will shrink to.

The main purpose of this paper is to answer this question. To be more precise, we will prove the following result:

\begin{theorem}\label{33}
Let $\omega^\varepsilon(x,t)$ be the unique weak solution of the Euler equation with initial vorticity $\omega^\varepsilon(x,0)$  satisfying \eqref{26},\eqref{27},\eqref{28}, and $T>0$ be fixed. Then for any $0<\alpha<\frac{1}{3}$, there exists $C>0$ depending only on $\alpha$ and $T$ such that
\begin{equation}\label{29}
supp\omega^\varepsilon(x,t)\subset B_{C\varepsilon^\alpha}(z(t)),\,\,\forall t\in[0,T],
\end{equation}
 where $z(t)$ is the solution of the following Kirchhoff-Routh equation
\begin{equation}\label{30}
\begin{cases}
 \frac{dz(t)}{dt}=-J\nabla H(z(t)),
 \\ z(0)=z_0.
\end{cases}
\end{equation}
\end{theorem}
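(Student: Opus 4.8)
The plan is to reduce the bounded-domain problem to the $\mathbb{R}^2$-type analysis of \cite{MP3,M} by a regularization that tames the boundary. First I note that the Kirchhoff--Routh flow \eqref{30} is Hamiltonian with Hamiltonian $H$, so $H(z(t))\equiv H(z_0)$; since $h(x,x)\to+\infty$ as $x\to\partial D$, the level set $\{H=H(z_0)\}$ is compact in $D$ and $z(t)$ stays in a fixed compact set $K\subset\subset D$ for all $t$. Fix $\rho_0>0$ with $dist(z(t),\partial D)>2\rho_0$ on $[0,T]$. Then replace the regular part $h$ by a function $\tilde h\in C^\infty(\overline D\times\overline D)$ that coincides with $h$ on $\overline{D_{\rho_0}}\times\overline{D_{\rho_0}}$ and is bounded together with its derivatives, set $\tilde G=\Gamma-\tilde h$ and $\tilde H(x)=\tfrac12\tilde h(x,x)$, and let $\tilde\omega^\varepsilon$ be the weak solution driven by $\tilde{\mathbf v}=J\nabla\tilde G\,\tilde\omega^\varepsilon$ with the same initial datum. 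Writing $\tilde{\mathbf v}=K*\tilde\omega^\varepsilon+\tilde F$, where $K(x,y)=J\nabla_x\Gamma(x,y)=-\tfrac{1}{2\pi}\tfrac{(x-y)^\perp}{|x-y|^2}$ and $\tilde F(x,t)=-\int_D J\nabla_x\tilde h(x,y)\tilde\omega^\varepsilon(y,t)\,dy$, the field $\tilde F$ is now bounded and Lipschitz in $x$ with some constant $L$, uniformly in $\varepsilon$ and $t$, precisely because $\tilde h$ is smooth up to the boundary.

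Next I track the center of vorticity $\tilde B(t)=\int_D x\,\tilde\omega^\varepsilon(x,t)\,dx$. Since $K(x,y)=-K(y,x)$, the self-interaction drops out of $\dot{\tilde B}$, leaving $\dot{\tilde B}=\int_D\tilde F\,\tilde\omega^\varepsilon\,dx$. For the second moment $m_1(t)=\int_D|x-\tilde B(t)|^2\tilde\omega^\varepsilon\,dx$ the same antisymmetry, after symmetrizing in $(x,y)$ and using $(x-y)\cdot K(x,y)=0$, kills the self-interaction exactly, so only the Lipschitz field contributes: a short computation gives $\dot m_1\le 4L\,m_1$, whence $m_1(t)\le 4\varepsilon^2 e^{4LT}=:C\varepsilon^2$ on $[0,T]$, using $supp\,\tilde\omega^\varepsilon(\cdot,0)\subset B_\varepsilon(z_0)$. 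This shows the bulk stays within $O(\varepsilon)$ of $\tilde B$.

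The hard part is to upgrade this bulk concentration to a bound on the \emph{full} support, and this is where the mechanism of \cite{MP3,M} and the exponent $1/3$ enter. I set $R_t=\sup\{|\Phi_t(x)-\tilde B(t)|:x\in supp\,\omega^\varepsilon_0\}$ and estimate its upper derivative at a maximizing particle $x_*$ with $r:=|x_*-\tilde B|=R_t$; the relevant quantity is the radial velocity $\tfrac{x_*-\tilde B}{r}\cdot(\tilde{\mathbf v}(x_*)-\dot{\tilde B})$. The external part is $\le 2L R_t+o(1)$ by the Lipschitz bound on $\tilde F$ together with $\dot{\tilde B}=\int\tilde F\tilde\omega^\varepsilon$. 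For the self part one finds $(x_*-\tilde B)\cdot K(x_*,y)=\tfrac{1}{2\pi}\tfrac{(x_*-\tilde B)\cdot(y-\tilde B)^\perp}{|x_*-y|^2}$ with $|(x_*-\tilde B)\cdot(y-\tilde B)^\perp|=|(x_*-\tilde B)\cdot(y-x_*)^\perp|\le r\,|x_*-y|$, so the kernel is only weakly singular. Splitting the $y$-integral at $|y-\tilde B|=r/2$: on the inner part $|x_*-y|\ge r/2$, and $\int|y-\tilde B|\tilde\omega^\varepsilon\le m_1^{1/2}\le C\varepsilon$ yield a bound $O(\varepsilon/R_t^2)$; on the outer part the mass is $m_{\mathrm{far}}\le 4m_1/R_t^2\le C\varepsilon^2/R_t^2$ by Chebyshev, and the potential estimate $\int\tfrac{f}{|x_*-y|}\le C\|f\|_\infty^{1/2}\|f\|_1^{1/2}$ combined with \eqref{27} yields a bound $O(\varepsilon^{1-\eta/2}/R_t)$. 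Hence, along $R_t\sim\varepsilon^\alpha$, one gets $\dot R_t\lesssim \varepsilon^\alpha+\varepsilon^{1-2\alpha}+\varepsilon^{1-\eta/2-\alpha}$. Integrating on $[0,T]$ and running this as a continuity argument — assume $R_t\le 2C\varepsilon^\alpha$, then prove $R_t<2C\varepsilon^\alpha$ for $\varepsilon$ small — forces $1-2\alpha\ge\alpha$, i.e. $\alpha\le\tfrac13$ (the term $\varepsilon^{1-\eta/2-\alpha}$ being harmless once $\eta$ is small), and yields $R_t\le C\varepsilon^\alpha$ on $[0,T]$. I expect this singular-kernel estimate at the edge of the support to be the main obstacle.

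Finally I compare $\tilde B$ with $z$. From $\dot{\tilde B}=\int\tilde F\tilde\omega^\varepsilon$, the concentration bounds give $\dot{\tilde B}=\tilde F(\tilde B)+O(\varepsilon^\alpha)=-J\nabla\tilde H(\tilde B)+O(\varepsilon^\alpha)$, using $\nabla\tilde H(x)=\nabla_x\tilde h(x,x)$ by the symmetry of $\tilde h$; since $\tilde H=H$ on $K$ the regularized flow coincides with $z$, so Gronwall against \eqref{30} together with $|\tilde B(0)-z_0|\le\varepsilon$ gives $|\tilde B(t)-z(t)|\le C\varepsilon^\alpha$ on $[0,T]$. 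Combining this with the support bound, $supp\,\tilde\omega^\varepsilon(\cdot,t)\subset B_{C\varepsilon^\alpha}(\tilde B(t))\subset B_{C'\varepsilon^\alpha}(z(t))\subset D_{\rho_0}$ for all small $\varepsilon$. But on $D_{\rho_0}$ we have $\tilde G=G$, so on the support the regularized velocity equals the genuine Euler velocity; by uniqueness of weak solutions $\tilde\omega^\varepsilon\equiv\omega^\varepsilon$, and the estimate \eqref{29} transfers to the Euler solution, completing the proof.
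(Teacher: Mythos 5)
Your architecture is the same as the paper's: tame the regular part $h$ near $\partial D$ so the boundary force becomes uniformly bounded and Lipschitz, run the Marchioro--Pulvirenti localization argument for the regularized dynamics, identify the motion of the vorticity center with the Kirchhoff--Routh flow, and then transfer back to the Euler solution by uniqueness once the support is known to stay inside $D_{\rho_0}$, where the regularized and true kernels agree. Your moment estimates, the Gronwall comparison of $\tilde B(t)$ with $z(t)$ (which is in fact cleaner and more quantitative than the paper's Arzel\`a--Ascoli extraction of a limit of $\bar z^{\varepsilon}$), and the final uniqueness step are all sound, modulo the minor point that the regularized flow should be posed on all of $\mathbb{R}^2$ (as in the paper), since $J\nabla\tilde G\tilde\omega^\varepsilon$ is not tangent to $\partial D$ and trajectories could a priori exit $D$.

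The genuine gap is in what you yourself flag as the hard part. For the far-field contribution to the radial velocity you use Chebyshev, $\|f\|_1\le 4m_1/R_t^2\le C\varepsilon^2/R_t^2$, together with the interpolation $\int f/|x_*-y|\le C\|f\|_\infty^{1/2}\|f\|_1^{1/2}$, obtaining $O(\varepsilon^{1-\eta/2}/R_t)$, i.e.\ $\varepsilon^{1-\eta/2-\alpha}$ along $R_t\sim\varepsilon^\alpha$; closing your continuity argument then requires $1-\eta/2-\alpha\ge\alpha$, i.e.\ $\eta\le 2-4\alpha$, which you dismiss as ``harmless once $\eta$ is small.'' But $\eta$ can never be small: the hypotheses \eqref{26}--\eqref{28} force $1=\int_D\omega^\varepsilon(x,0)\,dx\le M\varepsilon^{-\eta}\pi\varepsilon^2$, hence $\eta\ge 2$ for small $\varepsilon$. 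Consequently your far-field term is at least of order $\varepsilon^{-\alpha}$, which diverges, and the bootstrap closes for \emph{no} admissible pair $(M,\eta)$ — not even the canonical case $\omega^\varepsilon(\cdot,0)\sim\varepsilon^{-2}\mathbf{1}_{B_\varepsilon(z_0)}$. This is precisely the difficulty that \cite{MP3} and \cite{M} resolve and that the paper imports wholesale: Theorem 2.1 of \cite{M} gives the self-interaction bound $J_2\le C\varepsilon/R^2+A(\varepsilon)$ (the paper's \eqref{1062}) with $A(\varepsilon)$ smaller than any power of $\varepsilon$, and its proof is not a one-shot Chebyshev-plus-interpolation estimate; it is an iterative argument showing that the vorticity mass at distance $\ge\varepsilon^\alpha$ from the center is itself smaller than any power of $\varepsilon$, exploiting that mass can reach far regions only by flux across intermediate circles, a flux controlled self-consistently by the smallness of the far mass. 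Without reproducing that bootstrap (or citing it, as the paper does), your support bound — and hence the exponent $1/3$ — is unproven for every datum the theorem actually allows.
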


\begin{remark}
From Theorem \ref{33} it is easy to see that $\omega^\varepsilon(x,t)$ converges to the Dirac measure with unit mass at $z(t)$ uniformly as $\varepsilon\rightarrow 0$ in the sense of distribution, that is, for any $\phi\in C_c^\infty(D)$, we have
\begin{equation}
\lim_{\varepsilon\rightarrow0}\int_D\phi(x)\omega^\varepsilon(x,t)dx=\phi(z(t)).
\end{equation}
In fact, by \eqref{25} we have $\int_D|\omega^\varepsilon(x,t)|dx=\int_D|\omega^\varepsilon(x,0)|dx=1$ for all $t\geq0$, then
\begin{equation}
\begin{split}
|\int_D\phi(x)\omega^\varepsilon(x,t)dx-\phi(z(t))|&=|\int_D(\phi(x)-\phi(z(t)))\omega^\varepsilon(x,t)dx|\\
&\leq\int_D|(\phi(x)-\phi(z(t)))\omega^\varepsilon(x,t)|dx\\
&=\int_{B_{C\varepsilon^\alpha}(z(t))}|(\phi(x)-\phi(z(t)))\omega^\varepsilon(x,t)|dx\\
&\leq \sup_{x\in B_{C\varepsilon^\alpha}(z(t))}|\phi(x)-\phi(z(t))|\int_D|\omega^\varepsilon(x,t)|dx\\
&=\sup_{x\in B_{C\varepsilon^\alpha}(z(t))}|\phi(x)-\phi(z(t))|.
\end{split}
\end{equation}
Since $C\varepsilon^\alpha\rightarrow 0$ uniformly as $\varepsilon\rightarrow 0$, the result follows from the uniform continuity of $\phi$.
\end{remark}

\begin{remark}
By the theory of ordinary differential equations, there exists a unique smooth solution to \eqref{30}. At the same time, for such a solution
 \begin{equation}
 \frac{d}{dt}H(z(t))=\nabla H(z(t))\cdot \frac{dz(t)}{dt}=-\nabla H(z(t))\cdot J\nabla H(z(t)=0,
 \end{equation}
so we have $H(z(t))=H(z(0))$ for all $t\geq 0$. On the other hand, $H(x)\rightarrow+\infty $ as $dist(x,\partial D)\rightarrow 0$, so $z(t)$ will be away from the boundary for all $t\geq 0$. Therefore, we can choose a number $\rho_0>0$, depending only on $z_0$ and $D$, such that $dist(z(t),\partial D)>\rho_0$ for all $t\geq 0$.
\end{remark}

\section{Proof of Main Result}
In this section we prove Theorem \ref{33}. Roughly speaking, the proof consists of three steps: 1, we introduce the regularized system and study its property; 2, we prove a localization lemma for the regularized system; 3, we show that the support of the vorticity in the regularized system does not approach the boundary, therefore the vorticity of the regularized system coincides with the one of the Euler equation.

\subsection{The Regularized System}

In bounded domain $D$, the boundary effect on the fluid is the term $J\nabla\int_Dh(x,y)\omega^\varepsilon(x,t)dy$, which is called the boundary force in this paper. If the support of $\omega^\varepsilon$ approaches the boundary, the boundary force will become singular, which makes the problem difficult. To overcome this difficulty, we introduce a regularized system from the original one.

We first define two functions $\theta$ and $\chi$ for later use, where $\theta$ satisfies:
\begin{equation}\label{61}
\begin{cases}
 \theta\in C_c^\infty(D), 0\leq\theta\leq1,
\\  supp\theta\subset D_{\frac{\rho_0}{3}},
\\  \theta(x)=1, \forall \,x\in D_{\frac{\rho_0}{2}},
\end{cases}
\end{equation}
and $\chi(x)$ satisfies:
\begin{equation}\label{62}
\begin{cases}
 \chi\in C_c^\infty(D),0\leq\chi\leq1;
\\  \chi(x)=1, \forall \,x\in D_{\frac{\rho_0}{10}}.
\end{cases}
\end{equation}
Existence of such functions can easily be obtained by using standard mollifying technique.

Now consider the following system in the whole plane $\mathbb{R}^2$:

 \begin{equation}\label{35}
   \bar{\omega}(x,t)=\bar{\omega}(\bar{\Phi}_{-t}(x),0),
 \end{equation}
\begin{equation}\label{36}
\frac{d}{dt}\bar{\Phi}_t(x)=\bar{\mathbf{v}}(\bar{\Phi}_t(x),t),\,\,\,\,\,\ \bar{\Phi}_0(x)=x,\,\,\, x\in \mathbb{R}^2,
\end{equation}
\begin{equation}\label{37}
\bar{\mathbf{v}}=J\nabla \bar{G}\bar{\omega},
\end{equation}
where $\bar{G}\bar{\omega}$ is defined by
\begin{equation}\label{38}
  \bar{G}\bar{\omega}(x,t)=\int_{\mathbb{R}^2}\Gamma(x,y)\bar{\omega}(y,t)dy-\int_{\mathbb{R}^2}h(x,y)\theta(x)\chi(y)\bar{\omega}(y,t)dy.
\end{equation}
Here we extend $h(x,y)$ to $\mathbb{R}^4$ by setting $h(x,y)=0$ if $x\notin D$ or $y\notin D$. In this case $h(x,y)\theta(x)\chi(y)\in C_c^\infty(\mathbb{R}^4)$.

\begin{remark}
Since the regularized system is defined on the whole plane, it is possible that a fluid particle, even with initial position within $D$, may move out of $D$. But we will prove that for any fluid particle with initial position on the set $supp\omega^\varepsilon(x,0)$, its trajectory will coincide with the one governed by the Euler equation with the same initial position.
\end{remark}
The definition of the weak solution to the regularized is exactly the same as the Euler equation. More specifically, we have the following lemma.

\begin{lemma}
For fixed $\varepsilon$, there exists a unique weak solution to the regularized system with initial vorticity $\omega^\varepsilon(x,0)$ satisfying \eqref{26},\eqref{27},\eqref{28}, that is, a map $t\rightarrow (\bar{\omega}^\varepsilon(\cdot,t),\bar{\mathbf{v}}^\varepsilon(\cdot,t),\bar{\Phi}_t^\varepsilon(\cdot))$ satisfying
 \begin{equation}\label{101}
 \bar{\omega}^\varepsilon\in L^\infty((0,+\infty),L^1(\mathbb{R}^2)\cap L^\infty(\mathbb{R}^2)),\,\,\,\,\,\,\,\,\,  \bar{\mathbf{v}}^\varepsilon=\bar{G}\bar{\omega}^\varepsilon\in C([0,+\infty)\times \mathbb{R}^2),
 \end{equation}
 \begin{equation}\label{102}
 \bar{\omega}^\varepsilon(x,t)={\omega}^\varepsilon(\bar{\Phi}^\varepsilon_{-t}(x),0),
\end{equation}
\begin{equation}\label{103}
 \frac{d}{dt}\bar{\Phi}^\varepsilon_t(x)=\bar{\mathbf{v}}^\varepsilon(\bar{\Phi}^\varepsilon_t(x),t),\,\,\,\,\,\ \bar{\Phi}^\varepsilon_0(x)=x,\,\,\,\forall x\in D.
 \end{equation}
Moreover, for all $t\geq0$, $\bar{\Phi}^\varepsilon_t$ is a homeomorphism from $\mathbb{R}^2$ to $\mathbb{R}^2$ which preserves Lebesgue measure and the distributional function of $\bar{\omega}^\varepsilon(\cdot,t)$ does not change with time, that is, for any $a\in \mathbb{R}$ and $t\geq 0$,
\begin{equation}\label{110}
|\{x\in D|\bar{\omega}^\varepsilon(x,t)>a\}|=|\{x\in D|\bar{\omega}^\varepsilon(x,0)>a\}|.
\end{equation}

\end{lemma}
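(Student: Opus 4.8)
The plan is to reduce the regularized system to the classical full-plane Euler existence-uniqueness theory (\cite{MP4},\cite{Y}) by exploiting the smoothness of the extra boundary term. First I would split the velocity as $\bar{\mathbf{v}}=\bar{\mathbf{v}}_\Gamma+\bar{\mathbf{v}}_h$, where $\bar{\mathbf{v}}_\Gamma=J\nabla_x\int_{\mathbb{R}^2}\Gamma(x,y)\bar{\omega}(y,t)dy$ is the ordinary planar Biot-Savart field and $\bar{\mathbf{v}}_h=-J\nabla_x\int_{\mathbb{R}^2}h(x,y)\theta(x)\chi(y)\bar{\omega}(y,t)dy$ is the regularized boundary force. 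The key observation, already noted after \eqref{38}, is that $h(x,y)\theta(x)\chi(y)\in C_c^\infty(\mathbb{R}^4)$, so $\bar{\mathbf{v}}_h$ is a $C^\infty$ field, compactly supported in $x$ inside $supp\,\theta$, with sup-norm and Lipschitz constant bounded by $\|h\theta\chi\|_{C^2}\,\|\bar{\omega}(\cdot,t)\|_{L^1}$. Since $J\nabla_x\Gamma$ is locally integrable with the standard decay, $\bar{\mathbf{v}}_\Gamma$ obeys the familiar bounds $\|\bar{\mathbf{v}}_\Gamma\|_\infty\le C\|\bar{\omega}\|_{L^1}^{1/2}\|\bar{\omega}\|_{L^\infty}^{1/2}$ and the usual log-Lipschitz estimate in $x$. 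Hence $\bar{\mathbf{v}}$ is globally bounded and log-Lipschitz continuous, with constants controlled by $\|\bar{\omega}\|_{L^1}$ and $\|\bar{\omega}\|_{L^\infty}$ alone.

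Next I would run the standard iteration on the flow. Starting from $\bar{\Phi}^{(0)}_t(x)=x$, define inductively $\bar{\omega}^{(n)}(x,t)=\omega^\varepsilon(\bar{\Phi}^{(n)}_{-t}(x),0)$, then $\bar{\mathbf{v}}^{(n)}$ through the decomposition above, and finally $\bar{\Phi}^{(n+1)}_t$ by solving $\frac{d}{dt}\bar{\Phi}^{(n+1)}_t(x)=\bar{\mathbf{v}}^{(n)}(\bar{\Phi}^{(n+1)}_t(x),t)$. Because $\bar{\mathbf{v}}=J\nabla\bar{G}\bar{\omega}$ is a skew gradient of the stream function $\bar{G}\bar{\omega}$, it is divergence-free, so each $\bar{\Phi}^{(n)}_t$ preserves Lebesgue measure and every $\bar{\omega}^{(n)}$ inherits the distribution function of $\omega^\varepsilon(\cdot,0)$; in particular $\|\bar{\omega}^{(n)}(\cdot,t)\|_{L^1}=1$ and $\|\bar{\omega}^{(n)}(\cdot,t)\|_{L^\infty}\le M\varepsilon^{-\eta}$ uniformly in $n$ and $t$. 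This makes all the velocity bounds of the previous paragraph uniform in $n$, which closes the scheme.

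For convergence and uniqueness I would invoke the log-Lipschitz structure. Controlling $\sup_x|\bar{\Phi}^{(n+1)}_t(x)-\bar{\Phi}^{(n)}_t(x)|$ by a Gronwall argument against the log-Lipschitz modulus produces a Cauchy sequence on any finite interval, whose limit $(\bar{\omega}^\varepsilon,\bar{\mathbf{v}}^\varepsilon,\bar{\Phi}^\varepsilon_t)$ satisfies \eqref{101}-\eqref{103}, the continuity $\bar{\mathbf{v}}^\varepsilon\in C([0,+\infty)\times\mathbb{R}^2)$ coming from the uniform spatial modulus together with continuity in $t$. Uniqueness is identical: for two solutions the quantity $\delta(t)=\sup_x|\bar{\Phi}^{(1)}_t(x)-\bar{\Phi}^{(2)}_t(x)|$ obeys $\dot\delta\le C\,\varphi(\delta)$ with $\varphi$ the concave (Osgood) log-Lipschitz modulus, and $\delta(0)=0$ forces $\delta\equiv0$. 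The measure preservation and the invariance \eqref{110} of the distribution function then pass to the limit from the approximating flows, or are re-derived directly from the divergence-free form of $\bar{\mathbf{v}}^\varepsilon$.

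The genuinely new content relative to the classical theorem lies entirely in the boundary term $\bar{\mathbf{v}}_h$, and the one point to verify carefully is that its regularity and its bounds are governed only through the conserved mass $\|\bar{\omega}\|_{L^1}$; once this is checked, the estimates stay uniform throughout the evolution no matter how close $supp\,\bar{\omega}^\varepsilon$ comes to $\partial D$, which is exactly what the cutoffs $\theta,\chi$ are designed to guarantee. Everything else repeats the full-plane Yudovich-type argument verbatim; the main bookkeeping obstacle is maintaining the uniform-in-$n$ bounds along the iteration, but since the cutoff renders $\bar{\mathbf{v}}_h$ honestly Lipschitz rather than merely log-Lipschitz, it contributes nothing worse than the Biot-Savart term.
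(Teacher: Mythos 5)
Your proposal is correct and follows essentially the same route as the paper: the paper's proof is a one-line citation stating that the argument is identical to the classical full-plane Euler theory (Majda--Bertozzi, Chapter 8), and your write-up simply executes that classical Yudovich/Marchioro--Pulvirenti iteration, with the key (and correct) observation that the cutoff term $h(x,y)\theta(x)\chi(y)\in C_c^\infty(\mathbb{R}^4)$ makes the boundary contribution a smooth, divergence-free, Lipschitz perturbation whose bounds depend only on the conserved $L^1$ norm of the vorticity. Nothing in your argument deviates from what the paper implicitly relies on, so there is no gap to report.
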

\begin{proof}
The proof is exactly the same as the one for the Euler equation(see \cite{MB}, Chapter 8 for example), therefore we omit it here.
\end{proof}

For simplicity we write $\mathbf{F}^\varepsilon(x,t)=J\nabla\int_Dh(x,y)\theta(x)\chi(y)\bar{\omega}^\varepsilon(y,t)dy$. From now on we regard $\mathbf{F}^\varepsilon(x,t)$ as an external force. It is easy to see that for fixed $\varepsilon$ and $t$, $\mathbf{F}^\varepsilon(x,t)$ is a smooth and divergence-free vector field in $\mathbb{R}^2$ with compact support. Besides, we need some uniform estimates for $\mathbf{F}^\varepsilon$ as $\varepsilon\rightarrow 0$.

\begin{lemma}\label{444}
$\mathbf{F}^\varepsilon(x,t)$ is uniformly bounded and satisfies the uniform Lipschitz condition. More specifically,
there exist $L_1,L_2>0$, independent of $\varepsilon$, such that for all $\varepsilon>0, x,y\in \mathbb{R}^2, t\geq0$, we have
\begin{equation}\label{40}
|\mathbf{F}^\varepsilon(x,t)|<L_1,
\end{equation}
\begin{equation}\label{41}
|\mathbf{F}^\varepsilon(x,t)-\mathbf{F}^\varepsilon(y,t)|<L_2|x-y|.
\end{equation}
\end{lemma}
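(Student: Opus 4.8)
The plan is to reduce everything to the single observation that the kernel $g(x,y):=h(x,y)\theta(x)\chi(y)$ is a \emph{fixed} smooth function with compact support in $\mathbb{R}^4$ (independent of $\varepsilon$), so that the entire $\varepsilon$-dependence of $\mathbf{F}^\varepsilon$ sits in $\bar{\omega}^\varepsilon$, and on $\bar{\omega}^\varepsilon$ I only ever invoke the uniform normalization $\int_{\mathbb{R}^2}|\bar{\omega}^\varepsilon(y,t)|\,dy=1$. This mass bound holds for all $t\ge 0$ and all $\varepsilon$: by \eqref{26} the initial datum is nonnegative with unit integral, and since $\bar{\omega}^\varepsilon(\cdot,t)$ is a measure-preserving rearrangement of $\bar{\omega}^\varepsilon(\cdot,0)=\omega^\varepsilon(\cdot,0)$ (the flow $\bar{\Phi}^\varepsilon_t$ preserves Lebesgue measure, cf. \eqref{110}), it remains nonnegative with $\|\bar{\omega}^\varepsilon(\cdot,t)\|_{L^1}=1$. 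Writing $\mathbf{F}^\varepsilon(x,t)=J\nabla_x\int_{\mathbb{R}^2}g(x,z)\bar{\omega}^\varepsilon(z,t)\,dz$, the goal is to move the gradient inside the integral and then estimate $g$ pointwise.

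For the boundedness \eqref{40}, since $g\in C_c^\infty(\mathbb{R}^4)$ the map $z\mapsto g(x,z)$ and its $x$-derivatives are dominated by the fixed integrable bound $\|\nabla_x g\|_{L^\infty}\,\mathbf{1}_{\mathrm{supp}\,g}$, so differentiation under the integral sign is justified and $\mathbf{F}^\varepsilon(x,t)=J\int_{\mathbb{R}^2}(\nabla_x g)(x,z)\,\bar{\omega}^\varepsilon(z,t)\,dz$. Because $J$ is an isometry, I then get $|\mathbf{F}^\varepsilon(x,t)|\le \|\nabla_x g\|_{L^\infty}\int_{\mathbb{R}^2}|\bar{\omega}^\varepsilon(z,t)|\,dz=\|\nabla_x g\|_{L^\infty}$, so one may take $L_1:=\|\nabla_x g\|_{L^\infty}$, a finite constant independent of $\varepsilon$ and $t$. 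For the Lipschitz estimate \eqref{41} I would use that $\nabla_x g$ is itself smooth with compact support, hence globally Lipschitz in its first argument uniformly in $z$, with constant $\|\nabla_x^2 g\|_{L^\infty}$ (the sup of the $x$-Hessian of $g$). Thus $|(\nabla_x g)(x,z)-(\nabla_x g)(y,z)|\le \|\nabla_x^2 g\|_{L^\infty}\,|x-y|$ for all $z$, and integrating against $\bar{\omega}^\varepsilon(\cdot,t)$ gives $|\mathbf{F}^\varepsilon(x,t)-\mathbf{F}^\varepsilon(y,t)|\le \|\nabla_x^2 g\|_{L^\infty}\,|x-y|$, so $L_2:=\|\nabla_x^2 g\|_{L^\infty}$ works, again uniformly in $\varepsilon$ and $t$.

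There is no deep obstacle here; the one point that genuinely needs the structure of the problem, rather than soft arguments, is the assertion that $g=h\theta\chi\in C_c^\infty(\mathbb{R}^4)$, i.e.\ that $\nabla_x g$ and $\nabla_x^2 g$ are bounded at all. The regular part $h$ of the Green function is smooth in $D\times D$ but blows up as either argument approaches $\partial D$, so it is precisely the cutoffs that tame it: since $\mathrm{supp}\,\theta\subset D_{\rho_0/3}$ and $\mathrm{supp}\,\chi$ is a compact subset of $D$, both arguments of $h$ are confined to a compact region strictly interior to $D$, on which $h$ and all its derivatives are bounded. This is exactly the effect for which the regularized system was designed, and it is what converts the (potentially singular) boundary force of the Euler equation into a uniformly smooth external force. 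Once $g\in C_c^\infty(\mathbb{R}^4)$ is granted, the two estimates follow from differentiation under the integral and the uniform $L^1$ bound as above.
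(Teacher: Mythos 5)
Your proof is correct and follows essentially the same route as the paper: both reduce the lemma to the observation that $\bar h(x,y)=h(x,y)\theta(x)\chi(y)$ is a fixed $C_c^\infty(\mathbb{R}^4)$ kernel, differentiate under the integral sign, and combine the resulting uniform bounds on $\nabla_x\bar h$ and $\nabla_x^2\bar h$ with the conserved unit $L^1$ mass of $\bar\omega^\varepsilon(\cdot,t)$. Your write-up is in fact slightly more careful than the paper's, since you justify both the interchange of derivative and integral and the smoothness of the cutoff kernel, which the paper asserts without comment.
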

\begin{proof}
We write $\bar{h}(x,y)=h(x,y)\theta(x)\chi(y)\in C_c^\infty(\mathbb{R}^4)$. For any multi-index  $\alpha=(\alpha_1,\alpha_2)$, where $\alpha_1, \alpha_2$ are two non-negative integers, we have
\begin{equation}\label{42}
D_x^\alpha \int_D\bar{h}(x,y)\bar{\omega}^\varepsilon(y,t)dy= \int_DD_x^\alpha\bar{h}(x,y)\bar{\omega}^\varepsilon(y,t)dy,
\end{equation}
where $D^\alpha =\frac{\partial^{\alpha_1+\alpha_2}}{\partial_{x_1}^{\alpha_1}\partial_{x_2}^{\alpha_2}}$.
So
\begin{equation}\label{44}
|D_x^\alpha \int_D\bar{h}(x,y)\bar{\omega}^\varepsilon(y,t)dy|= |\int_DD_x^\alpha\bar{h}(x,y)\bar{\omega}^\varepsilon(y,t)dy|\leq C\int_D|\bar{\omega}^\varepsilon(y,t)|dy=C.
\end{equation}
Here and in the sequel $C$ denotes various positive numbers not depending on $\varepsilon$. Then
\begin{equation}
\begin{split}
|\mathbf{F}^\varepsilon(x,t)|&=|J\nabla\int_D\bar{h}(x,y)\bar{\omega}^\varepsilon(y,t)dy|\\
&=|\nabla\int_D\bar{h}(x,y)\bar{\omega}^\varepsilon(y,t)dy|\\
&\leq C.
\end{split}
\end{equation}
Also,
\begin{equation}
\begin{split}
|\mathbf{F}^\varepsilon(x,t)-\mathbf{F}^\varepsilon(y,t)|&=|J\nabla\int_D\bar{h}(x,z)\bar{\omega}^\varepsilon(z,t)dz-J\nabla\int_D\bar{h}(y,z)\bar{\omega}^\varepsilon(z,t)dz|\\
&=|\nabla\int_D\bar{h}(x,z)\bar{\omega}^\varepsilon(z,t)dz-\nabla\int_D\bar{h}(y,z)\bar{\omega}^\varepsilon(z,t)dz|\\
&\leq \sup_{x\in \mathbb{R}^2}|D_x^2\int_D\bar{h}(x,y)\bar{\omega}^\varepsilon(y,t)dy||x-y|\\
&\leq C|x-y|,
\end{split}
\end{equation}
which completes the proof.

\end{proof}

\begin{lemma}
For fixed $\varepsilon$ and any $t\in[0,+\infty)$, $\bar{\omega}^\varepsilon(\cdot,t)$ has compact support. Moreover, for any $f\in C^\infty(\mathbb{R}^2)$ we have
\begin{equation}\label{120}
\frac{d}{dt}\int_{\mathbb{R}^2}\bar{\omega}^\varepsilon(x,t)f(x)dx=\int_{\mathbb{R}^2}\bar{\omega}^\varepsilon(x,t)(\bar{\mathbf{v}}^\varepsilon\cdot\nabla f)(x,t)dx,
\end{equation}
where $\bar{\mathbf{v}}^\varepsilon$ is defined in \eqref{101}.
\end{lemma}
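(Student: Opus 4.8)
The plan is to derive both assertions from the Lagrangian representation \eqref{102}, namely $\bar{\omega}^\varepsilon(x,t)=\omega^\varepsilon(\bar{\Phi}^\varepsilon_{-t}(x),0)$, together with the fact established in the previous lemma that $\bar{\Phi}^\varepsilon_t$ is a measure-preserving homeomorphism of $\mathbb{R}^2$.

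For the compact support, I would first show that for fixed $\varepsilon$ the velocity field $\bar{\mathbf{v}}^\varepsilon$ is globally bounded. Writing $\bar{\mathbf{v}}^\varepsilon=J\nabla\int_{\mathbb{R}^2}\Gamma(x,y)\bar{\omega}^\varepsilon(y,t)dy-\mathbf{F}^\varepsilon(x,t)$, the second term is bounded by $L_1$ thanks to Lemma \ref{444}. For the first term, the classical potential estimate for the planar Biot--Savart kernel gives
\begin{equation}
\Big|J\nabla\int_{\mathbb{R}^2}\Gamma(x,y)\bar{\omega}^\varepsilon(y,t)dy\Big|\leq C\|\bar{\omega}^\varepsilon(\cdot,t)\|_{L^1}^{1/2}\|\bar{\omega}^\varepsilon(\cdot,t)\|_{L^\infty}^{1/2},
\end{equation}
obtained by splitting the integral over $\{|x-y|<R\}$ and $\{|x-y|\geq R\}$ and optimizing in $R$. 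Since \eqref{26}, \eqref{27} and the rearrangement identity \eqref{110} keep both norms bounded uniformly in $t$ (by $1$ and $M\varepsilon^{-\eta}$ respectively), there is a constant $V=V(\varepsilon)$ with $|\bar{\mathbf{v}}^\varepsilon(x,t)|\leq V$ for all $x,t$. Then \eqref{103} yields $|\bar{\Phi}^\varepsilon_t(x)-x|\leq Vt$, so every trajectory issuing from $supp\,\omega^\varepsilon(\cdot,0)\subset\overline{B_\varepsilon(z_0)}$ stays in $\overline{B_{\varepsilon+Vt}(z_0)}$. As $\bar{\Phi}^\varepsilon_t$ is a homeomorphism and the support is closed, $supp\,\bar{\omega}^\varepsilon(\cdot,t)=\bar{\Phi}^\varepsilon_t(supp\,\omega^\varepsilon(\cdot,0))$ is the continuous image of a compact set, hence compact and contained in $\overline{B_{\varepsilon+Vt}(z_0)}$.

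For the identity \eqref{120}, I would pass to Lagrangian coordinates. Using \eqref{102} and the change of variables $x=\bar{\Phi}^\varepsilon_t(y)$, whose Jacobian is $1$ because $\bar{\Phi}^\varepsilon_t$ preserves Lebesgue measure, one gets
\begin{equation}
\int_{\mathbb{R}^2}\bar{\omega}^\varepsilon(x,t)f(x)dx=\int_{\mathbb{R}^2}\omega^\varepsilon(y,0)f(\bar{\Phi}^\varepsilon_t(y))dy.
\end{equation}
Differentiating in $t$ and using \eqref{103} with the chain rule gives
\begin{equation}
\frac{d}{dt}\int_{\mathbb{R}^2}\omega^\varepsilon(y,0)f(\bar{\Phi}^\varepsilon_t(y))dy=\int_{\mathbb{R}^2}\omega^\varepsilon(y,0)\nabla f(\bar{\Phi}^\varepsilon_t(y))\cdot\bar{\mathbf{v}}^\varepsilon(\bar{\Phi}^\varepsilon_t(y),t)dy,
\end{equation}
and changing variables back to $x=\bar{\Phi}^\varepsilon_t(y)$ recovers the right-hand side of \eqref{120}.

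The two changes of variables are routine; the one point that needs genuine justification is the interchange of $\frac{d}{dt}$ with the integral. Here I would note that, since $\omega^\varepsilon(\cdot,0)$ is bounded with compact support, the trajectories $\bar{\Phi}^\varepsilon_t(y)$ remain in the fixed compact set $\overline{B_{\varepsilon+VT}(z_0)}$ for $t\in[0,T]$, and $t\mapsto\bar{\Phi}^\varepsilon_t(y)$ is $C^1$ because $\bar{\mathbf{v}}^\varepsilon$ is continuous on $[0,+\infty)\times\mathbb{R}^2$ by \eqref{101}. Consequently the integrand $\omega^\varepsilon(y,0)\nabla f(\bar{\Phi}^\varepsilon_t(y))\cdot\bar{\mathbf{v}}^\varepsilon(\bar{\Phi}^\varepsilon_t(y),t)$ is dominated by a constant multiple of the characteristic function of $supp\,\omega^\varepsilon(\cdot,0)$, uniformly for $t$ in compact intervals, so differentiation under the integral sign is legitimate by dominated convergence. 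This domination is the main technical point; the remainder is bookkeeping.
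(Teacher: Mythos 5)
Your proposal is correct and follows essentially the same route as the paper: boundedness of the velocity field gives finite-speed transport of the compact support, and the identity \eqref{120} is obtained by the same Lagrangian change of variables $x=\bar{\Phi}^\varepsilon_t(y)$, differentiation along trajectories, and change back. In fact you are somewhat more careful than the paper at two points — you split $\bar{\mathbf{v}}^\varepsilon$ into the Biot--Savart part (estimated by the $L^1$--$L^\infty$ interpolation bound) plus the external force $\mathbf{F}^\varepsilon$ (bounded by Lemma \ref{444}), where the paper quotes a single inequality with a typo, and you justify the interchange of $\frac{d}{dt}$ with the integral, which the paper performs without comment.
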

\begin{proof}
We use the following well-known inequality(see \cite{MB}, Chapter 8 for example):
\begin{equation}
|\bar{\mathbf{v}}^\varepsilon(\cdot,t)|_{L^\infty(\mathbb{R}^2)}\leq C(|\omega^\varepsilon(\cdot,0)|_{L^\infty(\mathbb{R}^2)}+|\omega^\varepsilon(\cdot,0)|_{L^\infty(\mathbb{R}^2)}),
 \end{equation}
 which means that $\bar{\mathbf{v}}^\varepsilon(\cdot,t)$ is uniformly bounded for $t\in[0,+\infty)$. On the other hand, the initial vorticity has compact support and by \eqref{102} the vorticity is constant along the particle paths, we know that $\bar{\omega}^\varepsilon(x,t)$ has compact support in any finite interval.

 \eqref{120} can be obtained by direct calculation. Indeed,
 \begin{equation}\label{124}\begin{split}
 \frac{d}{dt}\int_{\mathbb{R}^2}\bar{\omega}^\varepsilon(x,t)f(x)dx=&\frac{d}{dt}\int_{\mathbb{R}^2}\omega^\varepsilon(\bar{\Phi}_{-t}^\varepsilon(x),0)f(x)dx\\
 =&\frac{d}{dt}\int_{\mathbb{R}^2}\omega^\varepsilon(x,0)f(\bar{\Phi}_{t}^\varepsilon(x))dx\\
 =&\int_{\mathbb{R}^2}\omega^\varepsilon(x,0)\nabla f(\bar{\Phi}_{t}^\varepsilon(x))\cdot \bar{\mathbf{v}}^\varepsilon(\bar{\Phi}_{t}^\varepsilon(x),t) dx\\
 =&\int_{\mathbb{R}^2}\omega^\varepsilon(\bar{\Phi}_{-t}^\varepsilon(x),0)\nabla f(x)\cdot \bar{\mathbf{v}}^\varepsilon(x,t) dx\\
 =&\int_{\mathbb{R}^2}\bar{\omega}^\varepsilon(x,t)\nabla f(x)\cdot \bar{\mathbf{v}}^\varepsilon(x,t) dx.
 \end{split}
 \end{equation}
This completes the proof.

\end{proof}

\subsection{Localization Lemma}
Now we prove the localization property of $\bar{\omega}^\varepsilon(x,t)$ by taking the regularized boundary force $\mathbf{F}^\varepsilon(x,t)$ as an external force.

\begin{lemma}\label{47}
Let $T>0$ be fixed. Then for any $0<\alpha<\frac{1}{3}$, there exists $C>0$ depending on $\alpha$ and $T$ such that
\begin{equation}\label{245}
supp\bar{\omega}^\varepsilon(x,t)\subset B_{C\varepsilon^\alpha}(\bar{z}^\varepsilon(t)),\,\,\forall t\in[0,T],
\end{equation}
where $\bar{z}^\varepsilon(t)$ satisfies the following equation
\begin{equation}\label{46}
\begin{cases}
 \frac{d\bar{z}^\varepsilon(t)}{dt}=-\mathbf{F}^\varepsilon(\bar{z}^\varepsilon(t),t),
 \\ \bar{z}^\varepsilon(0)=z_0.
\end{cases}
\end{equation}
\end{lemma}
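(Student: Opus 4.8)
The plan is to follow the strategy of Marchioro--Pulvirenti \cite{MP3}: control the growth of the ``moment of inertia'' of the vorticity about its own center, using that the self-interaction of the vortex blob cannot push mass radially outward, so that the only spreading comes from the external force $\mathbf{F}^\varepsilon$, which is uniformly Lipschitz by Lemma \ref{444}. Concretely, I first define the center of vorticity $\bar{B}^\varepsilon(t)=\int_{\mathbb{R}^2}x\,\bar{\omega}^\varepsilon(x,t)\,dx$ and, using \eqref{120} with $f(x)=x$, compute $\frac{d}{dt}\bar{B}^\varepsilon(t)=\int \bar{\mathbf{v}}^\varepsilon\bar{\omega}^\varepsilon\,dx$. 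Splitting $\bar{\mathbf{v}}^\varepsilon$ into the $\Gamma$-part (the Biot--Savart self-interaction on $\mathbb{R}^2$) and the $\bar h$-part (the external force $\mathbf{F}^\varepsilon$), the self-interaction term vanishes by the standard antisymmetry argument (the kernel $J\nabla\Gamma(x,y)$ is antisymmetric under $x\leftrightarrow y$), leaving $\frac{d}{dt}\bar{B}^\varepsilon(t)=\int \mathbf{F}^\varepsilon(x,t)\bar{\omega}^\varepsilon(x,t)\,dx$. This shows the center of vorticity is driven essentially by $\mathbf{F}^\varepsilon$ evaluated near the blob, and comparison with \eqref{46} will later let me relate $\bar{B}^\varepsilon(t)$ to $\bar{z}^\varepsilon(t)$.

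The core estimate is on the spreading. I would introduce the weighted moment
\begin{equation}
I^\varepsilon(t)=\int_{\mathbb{R}^2}|x-\bar{B}^\varepsilon(t)|^2\,\bar{\omega}^\varepsilon(x,t)\,dx,
\end{equation}
and differentiate it using \eqref{120}. The time derivative produces a term from the motion of $\bar{B}^\varepsilon(t)$ and a term $\int \bar{\mathbf{v}}^\varepsilon\cdot 2(x-\bar{B}^\varepsilon)\,\bar{\omega}^\varepsilon\,dx$. Again splitting $\bar{\mathbf{v}}^\varepsilon$, the self-interaction contribution vanishes by antisymmetrizing the double integral of $(x-y)\cdot J\nabla\Gamma(x,y)=0$ (the radial and rotational directions are orthogonal), which is precisely the statement that the vortex cannot spread itself. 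The surviving contribution involves only the external force and the motion of the center, and using $|\mathbf{F}^\varepsilon(x,t)-\mathbf{F}^\varepsilon(\bar{B}^\varepsilon,t)|\leq L_2|x-\bar{B}^\varepsilon|$ from \eqref{41}, I expect a differential inequality of the shape $\frac{d}{dt}I^\varepsilon(t)\leq CI^\varepsilon(t)$, whence by Gronwall $I^\varepsilon(t)\leq I^\varepsilon(0)e^{CT}\leq C\varepsilon^2$ on $[0,T]$, using \eqref{28} which forces $I^\varepsilon(0)\leq\varepsilon^2$.

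A second-moment bound alone only controls an $L^2$-average of the spread, not the full support, so the next step upgrades this to a pointwise bound on the trajectories, which is the heart of the Marchioro--Pulvirenti argument and the step I expect to be the main obstacle. The idea is to track a single fluid particle $\bar{\Phi}^\varepsilon_t(x)$ with $x\in supp\,\omega^\varepsilon(\cdot,0)$ and bound its distance $R^\varepsilon(t)=|\bar{\Phi}^\varepsilon_t(x)-\bar{B}^\varepsilon(t)|$ from the center. Its rate of change is governed by the radial component of $\bar{\mathbf{v}}^\varepsilon-\dot{\bar B}^\varepsilon$; the external-force contribution is controlled by the Lipschitz bound \eqref{41} and contributes a term $\leq L_2 R^\varepsilon(t)$, while the self-induced velocity must be estimated by splitting the Biot--Savart integral into a ``near'' region (particles within some intermediate radius, handled using the $L^\infty$ bound \eqref{27} on the vorticity, which bounds the mass in a small disk by its area times $M\varepsilon^{-\eta}$) and a ``far'' region (handled using the $L^2$ moment bound $I^\varepsilon(t)\leq C\varepsilon^2$ via Chebyshev to show almost no mass sits far away). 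Optimizing the cutoff radius against the competing $\varepsilon$-powers is where the exponent $\alpha<\frac13$ emerges, and I would close the argument with a Gronwall-type estimate on $\sup_x R^\varepsilon(t)$ to conclude that all particles, hence $supp\,\bar{\omega}^\varepsilon(\cdot,t)$, stay within $B_{C\varepsilon^\alpha}$ of the center; a final comparison between $\bar{B}^\varepsilon(t)$ and $\bar{z}^\varepsilon(t)$ via the already-derived ODE for $\bar{B}^\varepsilon$ and \eqref{46} recenters the ball at $\bar{z}^\varepsilon(t)$ as stated in \eqref{245}.
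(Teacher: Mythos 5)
Your overall architecture coincides with the paper's (which itself follows Marchioro \cite{M}): define the center of vorticity, kill the self-interaction in its ODE by antisymmetry of the Biot--Savart kernel, bound the moment of inertia by Gr\"onwall using the Lipschitz estimate \eqref{41} to get $I^\varepsilon(t)\leq C\varepsilon^2$, then pass to a pointwise radial estimate on trajectories and finally recenter the ball at $\bar z^\varepsilon(t)$ (the paper proves $|\bar z^\varepsilon(t)-m^\varepsilon(t)|\leq C\varepsilon$ before the pointwise step rather than after; this ordering is immaterial). Your first two steps are correct as written.

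The gap is in your third step, the estimate of the radial component of the self-induced velocity at a particle $x$ with $R=|x-\bar B^\varepsilon(t)|\sim\varepsilon^\alpha$ --- exactly the part the paper does not reprove but quotes from Theorem 2.1 of \cite{M} as $J_2\leq C\varepsilon/R^2+A(\varepsilon)$. First, your near/far dichotomy does not control the dominant contribution: the bulk of the vorticity (mass $\approx 1$) sits within $O(\varepsilon)$ of the center, so it is neither close to $x$ (the $L^\infty$ bound is irrelevant there) nor captured by Chebyshev (its mass is not small), and the crude kernel bound for that region gives a radial speed of order $1/R$, which is useless. What is needed is the cancellation that $J\nabla\Gamma(x-\bar B^\varepsilon)$ is purely tangential, so that the radial component equals $\int\bigl(K(x-y)-K(x-\bar B^\varepsilon)\bigr)\cdot\hat r\,\bar\omega^\varepsilon(y,t)\,dy$ with $K=J\nabla\Gamma$; the mean value theorem ($|\nabla K|\leq C/R^2$ away from the singularity) plus Cauchy--Schwarz then give $CI^{1/2}/R^2\leq C\varepsilon/R^2$, and it is precisely this term that produces the threshold $\alpha<\tfrac13$. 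Second, for the genuinely far mass your single application of Chebyshev, $m_{\mathrm{far}}\leq C\varepsilon^2/R^2$, is too weak under hypothesis \eqref{27}, which allows an arbitrary fixed $\eta>0$: optimizing the singularity cutoff $d$ in $M\varepsilon^{-\eta}d+C\varepsilon^2/(dR^2)$ yields a radial speed $\sim\varepsilon^{1-\eta/2}/R$, which with $R\sim\varepsilon^\alpha$ forces $\alpha\leq\tfrac12-\tfrac{\eta}{4}$; this is weaker than $\tfrac13$ once $\eta>\tfrac23$ and vacuous for $\eta\geq 2$. Closing this for all admissible $\eta$ requires Marchioro's iterative mass-localization lemma (time-dependent cutoffs on annuli, iterated to show that the mass outside $B_{C\varepsilon^\alpha}$ is smaller than any power of $\varepsilon$ --- the $A(\varepsilon)$ above), not a one-shot Chebyshev bound. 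So your plan reproduces the easy half of the paper's proof, but the cutoff optimization you describe cannot, as stated, reach $\alpha<\tfrac13$.
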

\begin{proof}
The proof is essentially the same as Theorem 2.1 in \cite{M} but replacing a given external force by a family of external forces $\mathbf{F}^\varepsilon$, which is uniform bounded and Lipschitz continuous.

Firstly we show that the distance between $\bar{z}^\varepsilon(t)$ and the center of the vorticity vanishes at least as the order $\varepsilon$, where the center of vorticity is defined by
\begin{equation}\label{111}
m^\varepsilon(t)\triangleq \int_{\mathbb{R}^2}x\bar{\omega}^\varepsilon(x,t)dx.
\end{equation}
We also define the moment of inertia of the vorticity with respect to $m^\varepsilon(t)$ by
\begin{equation}\label{112}
I^\varepsilon(t)\triangleq \int_{\mathbb{R}^2}|x-m^\varepsilon(t)|^2\bar{\omega}^\varepsilon(x,t)dx.
\end{equation}
We claim that there exists a positive number $C$ not depending on $\varepsilon$ such that for any $t\in[0,T]$
\begin{equation}
|\bar{z}^\varepsilon(t)-m^\varepsilon(t)|\leq C\varepsilon.
\end{equation}
In fact, by choosing $f(x)=x_i$, $i=1,2$, and using \eqref{120},
\begin{equation}\label{130}
\begin{split}
\frac{d}{dt}m^\varepsilon(t)&=\frac{d}{dt}\int_{\mathbb{R}^2}x\bar{\omega}^\varepsilon(x,t)dx\\
&=\frac{d}{dt}\int_{\mathbb{R}^2}\bar{\omega}^\varepsilon(x,t)\bar{\mathbf{v}}^\varepsilon(x,t)dx\\
&=\int_{\mathbb{R}^4}-\frac{1}{2\pi}\frac{J(x-y)}{|x-y|^2}\bar{\omega}^\varepsilon(x,t)
\bar{\omega}^\varepsilon(y,t)dxdy-\int_{\mathbb{R}^2}\bar{\omega}^\varepsilon(x,t)\mathbf{F}^\varepsilon(x,t)dx\\
&=-\int_{\mathbb{R}^2}\bar{\omega}^\varepsilon(x,t)\mathbf{F}^\varepsilon(x,t)dx,
\end{split}
\end{equation}
where we have used the antisymmetry of $-\frac{1}{2\pi}\frac{J(x-y)}{|x-y|^2}$. Similarly,
\begin{equation}\label{131}
\frac{d}{dt}I^\varepsilon(t)=-2\int_{\mathbb{R}^2}(x-m^\varepsilon(t))\cdot\mathbf{F}^\varepsilon(x,t)\bar{\omega}^\varepsilon(x,t)dx.
\end{equation}
Now
\begin{equation}\label{132}
\begin{split}
|\frac{d}{dt}I^\varepsilon(t)|&=|2\int_{\mathbb{R}^2}(x-m^\varepsilon(t))\cdot\mathbf{F}^\varepsilon(x,t)\bar{\omega}^\varepsilon(x,t)dx|\\
&=|2\int_{\mathbb{R}^2}(x-m^\varepsilon(t))\cdot\mathbf{F}^\varepsilon(x,t)\bar{\omega}^\varepsilon(x,t)dx-
2\int_{\mathbb{R}^2}(x-m^\varepsilon(t))\cdot\mathbf{F}^\varepsilon(m^\varepsilon(t),t)\bar{\omega}^\varepsilon(x,t)dx|\\
&\leq 2L_2\int_{\mathbb{R}^2}|x-m^\varepsilon(t))|^2|\bar{\omega}^\varepsilon(x,t)|dx\\
&=2L_2I^\varepsilon(t),
\end{split}
\end{equation}
where we have used the uniform Lipschitz condition \eqref{41} and the fact that $\int_{\mathbb{R}^2}(x-m^\varepsilon(t))\cdot\mathbf{F}^\varepsilon(m^\varepsilon(t),t)\bar{\omega}^\varepsilon(x,t)dx=0$.
Then by Gr\"{o}nwall's inequality we have
\begin{equation}
I^\varepsilon(t)\leq I^\varepsilon(0)e^{2L_2t}.
\end{equation}
On the other hand,
\begin{equation}
I^\varepsilon(0)=\int_{\mathbb{R}^2}{\omega}^\varepsilon(x,0)|x-m^\varepsilon(0)|^2\leq 4\varepsilon^2,
\end{equation}
so
\begin{equation}\label{133}
I^\varepsilon(t)\leq 4\varepsilon^2e^{2L_2T},\,\,\forall t\in[0,T]
\end{equation}
Now we calculate
\begin{equation}\label{135}
\begin{split}
|\bar{z}^\varepsilon(t)-m^\varepsilon(t)|=&|z_0-\int_0^t\mathbf{F}^\varepsilon(\bar{z}^\varepsilon(s),s)ds-m^\varepsilon(0)+
\int_0^t\int_{\mathbb{R}^2}\mathbf{F}^\varepsilon(x,s)\bar{\omega}^\varepsilon(x,s)dxds|\\
\leq&|z_0-m^\varepsilon(0)|+\int_0^t|\mathbf{F}^\varepsilon(\bar{z}^\varepsilon(s),s)-\mathbf{F}^\varepsilon(m^\varepsilon(s),s)|ds\\
+&\int_0^t|\int_{\mathbb{R}^2}(\mathbf{F}^\varepsilon(m^\varepsilon(s),s)-\mathbf{F}^\varepsilon(x,s))\bar{\omega}^\varepsilon(x,s)dx|ds\\
\leq&|z_0-m^\varepsilon(0)|+\int_0^tL_2|\bar{z}^\varepsilon(s)-m^\varepsilon(s)|ds\\
+&\int_0^t\int_{\mathbb{R}^2}L_2|m^\varepsilon(s)-x|\bar{\omega}^\varepsilon(x,s)dxds\\
\leq&|z_0-m^\varepsilon(0)|+\int_0^tL_2|\bar{z}^\varepsilon(s)-m^\varepsilon(s)|ds
+L_2 T \sup_{t\in[0,T]}|I^\varepsilon(t)|^\frac{1}{2}.
\end{split}
\end{equation}
Since $supp\omega^\varepsilon(\cdot,0)\subset B_{\varepsilon}(z_0)$ and $\int_{\mathbb{R}^2}{\omega}^\varepsilon(x,0)dx=1$, we have
\begin{equation}\label{1040}
|z_0-m^\varepsilon(0)|=|z_0-\int_{\mathbb{R}^2}x{\omega}^\varepsilon(x,0)dx|\leq \int_{\mathbb{R}^2}|z_0-x|{\omega}^\varepsilon(x,0)dx\leq\varepsilon.
\end{equation}
\eqref{133},\eqref{135} and \eqref{1040} give
\begin{equation}\label{1041}
|\bar{z}^\varepsilon(t)-m^\varepsilon(t)|\leq\varepsilon+L_2\int_0^t|\bar{z}^\varepsilon(s)-m^\varepsilon(s)|ds+2\varepsilon L_2Te^{L_2T},
\end{equation}
then by Gr\"{o}nwall's inequality we have for any $t\in[0,T]$
\begin{equation}
|\bar{z}^\varepsilon(t)-m^\varepsilon(t)|\leq C\varepsilon,
\end{equation}
where $C$ depends only on $L_2$ and $T$. This proves the claim.

Now we finish the proof by showing the following statement: for any $0<\alpha<\frac{1}{3}$, there exists $C>0$, depending only on $\alpha$ and $T$, such that for any $t\in[0,T]$
\begin{equation}\label{1090}
supp\bar{\omega}^\varepsilon\subset B_{C\varepsilon^\alpha}(m^\varepsilon(t)).
\end{equation}
For any fixed $t$, consider a fluid particle $x$, $x\in supp\bar{\omega}^\varepsilon(\cdot,t)$, the growth of the distance between $x$ and $m^\varepsilon(t)$ is
\begin{equation}\label{1050}
\begin{split}
&|(J\nabla\int_{\mathbb{R}^2}\Gamma(x,y)\bar{\omega}^\varepsilon(y,t)dy-\mathbf{F}^\varepsilon(x,t)-\frac{dm^\varepsilon(t)}{dt})\cdot\frac{x-m^\varepsilon(t)}{|x-m^\varepsilon(t)|}|\\
\leq&|(\mathbf{F}^\varepsilon(x,t)+\frac{dm^\varepsilon(t)}{dt})\cdot\frac{x-m^\varepsilon(t)}{|x-m^\varepsilon(t)|}|+|(J\nabla\int_{\mathbb{R}^2}\Gamma(x,y)\bar{\omega}^\varepsilon(y,t)dy)
\cdot\frac{x-m^\varepsilon(t)}{|x-m^\varepsilon(t)|}|\\
\triangleq& J_1+J_2.
\end{split}
\end{equation}
By \eqref{41} and \eqref{130}, $J_1$ can be estimated as follows
\begin{equation}\label{1061}
\begin{split}
J_1&=|(\mathbf{F}^\varepsilon(x,t)+\frac{dm^\varepsilon(t)}{dt})\cdot\frac{x-m^\varepsilon(t)}{|x-m^\varepsilon(t)|}|\\
&\leq|\mathbf{F}^\varepsilon(x,t)+\frac{dm^\varepsilon(t)}{dt}|\\
&=|\mathbf{F}^\varepsilon(x,t)-\int_{\mathbb{R}^2}\bar{\omega}^\varepsilon(y,t)\mathbf{F}^\varepsilon(y,t)dy|\\
&\leq CR,
\end{split}
\end{equation}
where $C$ depends only on $L_2$ and $R\triangleq|x-m^\varepsilon(t)|$.

The term $J_2$ does not contain $\mathbf{F}^\varepsilon$, so the estimate for $J_2$ is exactly the same as the one in Theorem 2.1 in \cite{M}, that is, when $R>C\alpha^\alpha$ for $\alpha<\frac{1}{3}$,
\begin{equation}\label{1062}
J_2\leq C\frac{\varepsilon}{R^2}+A(\varepsilon),
\end{equation}
where $A(\varepsilon)$ is smaller than any power in $\varepsilon$, or equivalently for any $\gamma>0$,
\begin{equation}
\lim_{\varepsilon\rightarrow0}\frac{A(\varepsilon)}{\varepsilon^\gamma}=0.
\end{equation}
\eqref{1061} and \eqref{1062} together give
\begin{equation}
|\frac{dR}{dt}|\leq CR+C\frac{\varepsilon}{R^2}+A(\varepsilon).
\end{equation}
 That is, for any fluid particle at $x$ at time $t$ with $|x-m^\varepsilon(t)|>C\varepsilon^\alpha$, the growth of the distance between $x$ and $m^\varepsilon(t)$ vanishes uniformly in any finite time interval as $\varepsilon\rightarrow0$. Then
\eqref{1090} follows immediately from Gr\"{o}nwall's inequality, which completes the proof.

\end{proof}

Now we are ready to prove Theorem \ref{33}.
\begin{proof}[Proof of Theorem \ref{33}]

Step 1: By definition of $\mathbf{F}^\varepsilon$, we have $\mathbf{F}^\varepsilon(x,t)\equiv0$ for $x\in\{x\in D| dist(x,\partial D)<\frac{\rho_0}{3}\}$. Then by\eqref{46} we know that $\bar{z}^\varepsilon(x,t)\in D_{\frac{\rho_0}{3}}$ for all $t$ and $\varepsilon$, which means that $\bar{z}^\varepsilon$ is uniformly bounded with respect to $\varepsilon$ for $t\in[0,T]$. On the other hand, by Lemma \ref{444}, we know that $\frac{d\bar{z}^\varepsilon(t)}{dt}=-\mathbf{F}^\varepsilon(\bar{z}^\varepsilon(t),t)$ is also uniformly bounded.
 Then by Arzela-Ascoli theorem, we can choose a subsequence of $\{\bar{z}^\varepsilon(t)\}$, say $\{\bar{z}^{\varepsilon_j}(t)\}$, such that $\bar{z}^{\varepsilon_j}(t)\rightarrow \bar{z}(t)$ uniformly for $t\in[0,T]$ as $\varepsilon_j\rightarrow 0$. It is obvious that for all $t\in[0,T]$,
 \begin{equation}\label{161}
 dist(\bar{z}(t),\partial D)>\frac{\rho_0}{3}.
 \end{equation}

Step 2: We show that
\begin{equation}
\mathbf{F}^{\varepsilon_j}(x,t)=J\nabla\int_Dh(x,y)\theta(x)\bar{\omega}^{\varepsilon_j}(y,t)dy
\end{equation}
moreover,
\begin{equation}\label{165}
\mathbf{F}^{\varepsilon_j}(x,t)\rightarrow J\nabla(\theta(x)h(x,\bar{z}(t)))
\end{equation}
uniformly for $t\in[0,T]$.
In fact, by \eqref{245} and \eqref{161} we have
\begin{equation}
dist(supp\bar{\omega}^{\varepsilon_j}(\cdot,t),\partial D)>\frac{\rho_0}{4}
\end{equation}
for any $t\in[0,T]$ provided $\varepsilon_j$ is sufficiently small. Now since $\chi(y)\equiv1$ for any $y\in D_{\frac{\rho_0}{10}}$, we have
\begin{equation}
\begin{split}
\mathbf{F}^{\varepsilon_j}(x,t)&=J\nabla\int_Dh(x,y)\theta(x)\chi(y)\bar{\omega}^{\varepsilon_j}(y,t)dy\\
&=J\nabla\int_{D_\frac{\rho_0}{4}}h(x,y)\theta(x)\chi(y)\bar{\omega}^{\varepsilon_j}(y,t)dy\\
&=J\nabla\int_{D_\frac{\rho_0}{4}}h(x,y)\theta(x)\bar{\omega}^{\varepsilon_j}(y,t)dy\\
&=J\nabla\int_Dh(x,y)\theta(x)\bar{\omega}^{\varepsilon_j}(y,t)dy.
\end{split}
\end{equation}
\eqref{165} can be proved by calculating directly. Indeed,
\begin{equation}
\begin{split}
&|\mathbf{F}^{\varepsilon_j}(x,t)- J\nabla(\theta(x)h(x,\bar{z}(t)))|\\
=&|J\nabla\int_Dh(x,y)\theta(x)\bar{\omega}^{\varepsilon_j}(y,t)dy-J\nabla(\theta(x)h(x,\bar{z}(t)))|\\
=&|\nabla\int_Dh(x,y)\theta(x)\bar{\omega}^{\varepsilon_j}(y,t)dy-\nabla(\theta(x)h(x,\bar{z}(t)))|\\
=&|\int_D\nabla_x(h(x,y)\theta(x))\bar{\omega}^{\varepsilon_j}(y,t)dy-\int_D\nabla_x(\theta(x)h(x,\bar{z}(t)))\bar{\omega}^{\varepsilon_j}(y,t)dy|\\
\leq&\int_D|\nabla_x(h(x,y)\theta(x))-\nabla_x(\theta(x)h(x,\bar{z}(t)))|\bar{\omega}^{\varepsilon_j}(y,t)dy\\
=&\int_{B_\delta(\bar{z}(t))}|\nabla_x(h(x,y)\theta(x))-\nabla_x(\theta(x)h(x,\bar{z}(t)))|\bar{\omega}^{\varepsilon_j}(y,t)dy\\
\leq&\sup_{y\in B_\delta(\bar{z}(t))}|\nabla_x(h(x,y)\theta(x))-\nabla_x(\theta(x)h(x,\bar{z}(t)))|,
\end{split}
\end{equation}
 where $\delta=C\varepsilon^\alpha$. By \eqref{161}, the set $\{y\in B_\delta(\bar{z}(t))|t\in[0,T]\}\subset\subset D$ if $\varepsilon$ is sufficiently small, then $\nabla_x(h(x,y)\theta(x))$ is uniformly continuous on $\mathbb{R}^2\times \{y\in B_\delta(\bar{z}(t))|t\in[0,T]\}$, so
\begin{equation}
\sup_{y\in B_\delta(\bar{z}(t))}|\nabla_x(h(x,y)\theta(x))-\nabla_x(\theta(x)h(x,\bar{z}(t)))|\rightarrow0
\end{equation}
uniformly for $t\in[0,T]$ as $\varepsilon\rightarrow0$, which proves \eqref{165}.

Step 3: Taking limit in \eqref{46}, we know that $\bar{z}(t)$ satisfies the following equation
\begin{equation}
\begin{cases}
 \frac{d\bar{z}(t)}{dt}=-J\nabla(\theta(\bar{z}(t))h(\bar{z}(t),\bar{z}(t))),
 \\ \bar{z}(0)=z_0.
 \end{cases}
\end{equation}

Now recall $z(t)$, the function defined by \eqref{30}. Since $dist(z(t),\partial D)>\rho_0$ and $\theta \equiv1 $ in $D_\frac{\rho_0}{2}$, we can rewrite \eqref{30} as
\begin{equation}\label{181}
\begin{cases}
 \frac{dz(t)}{dt}=-J\nabla(\theta(z(t))h(z(t),z(t))),
 \\ z(0)=z_0.
\end{cases}
\end{equation}
That is, $\bar{z}(t)$ and $z(t)$ satisfy the same ordinary differential equation, then by uniqueness we have $\bar{z}(t)=z(t)$.
 Moreover, we know that the limit is independent of the subsequence $\{\varepsilon_j\}$ and thus $\bar{z}^\varepsilon(t)\rightarrow z(t)$ as $\varepsilon\rightarrow 0$ arbitrarily.

Step 4: We show that $\omega^\varepsilon=\bar{\omega}^\varepsilon$ if $\varepsilon$ is sufficiently small, which will finish the proof of Theorem \ref{33}.

From now on, we choose $x\in supp\omega^\varepsilon(\cdot,0)$ to be fixed. Since $dist(z(t),\partial D)>\rho_0$ for $t\in[0,T]$, \eqref{245} implies $dist(supp{\bar{\omega}^\varepsilon},\partial D)>\frac{2}{3}\rho_0$ by choosing $\varepsilon$ small. In this case $\bar{\Phi}^\varepsilon_t(x)\in D_{\frac{2}{3}\rho_0}$, since $\bar{\Phi}^\varepsilon_0(x)=x\in supp\omega^\varepsilon(\cdot,0)$. So $\theta(\bar{\phi}^\varepsilon_t(x))=1$ for all $t$, which gives the following equations:
\begin{equation}\label{169}
\bar{\omega}^\varepsilon(x,t)=\omega^\varepsilon(\bar{\Phi}_{-t}^\varepsilon(x),0),
 \end{equation}
\begin{equation}\label{175}
 \frac{d}{dt}\bar{\Phi}^\varepsilon_t(x)=\bar{\mathbf{v}}^\varepsilon(\bar{\Phi}^\varepsilon_t(x),t),\,\,\,\,\,\ \bar{\Phi}^\varepsilon_0(x)=x,
 \end{equation}

 \begin{equation}\label{176}
\bar{\mathbf{v}}^\varepsilon(x,t)=J\nabla (\int_{\mathbb{R}^2}\Gamma(x,y)\bar{\omega}^\varepsilon(y,t)dy-\int_{\mathbb{R}^2}h(x,y)\bar{\omega}^\varepsilon(y,t)dy).
\end{equation}

If we define $\hat{\Phi}_t$ as follows
\begin{equation}
\frac{d}{dt}\hat{\Phi}_t(x)=J\nabla G\bar{\omega}^\varepsilon(\hat{\Phi}_t(x),t),\,\,\,\, \hat{\Phi}_0(x)=x,\,\,\,\,\forall x\in D,
\end{equation}
then we find $\bar{\omega}^\varepsilon(x,t)$ and $\hat{\Phi}^\varepsilon_{t}(x)$ satisfy

 \begin{equation}\label{202}
 \bar{\omega}^\varepsilon(x,t)=\omega^\varepsilon(\hat{\Phi}^\varepsilon_{-t}(x),0),
\end{equation}

 \begin{equation}\label{203}
 \frac{d}{dt}\hat{\Phi}_t(x)=J\nabla G\bar{\omega}^\varepsilon(\hat{\Phi}_t(x),t),\,\,\,\, \hat{\Phi}_0(x)=x,\,\,\,\,\forall x\in D,
 \end{equation}
that is, the mapping $t\rightarrow(\bar{\omega}^\varepsilon(\cdot,t),J\nabla G\bar{\omega}^\varepsilon(\cdot,t),\hat{\Phi}^\varepsilon_t(\cdot))$ is also a weak solution of the Euler equation with initial vorticity $\omega^\varepsilon(x,0)$, then by the uniqueness of the Euler equation we have  $\omega^\varepsilon=\bar{\omega}^\varepsilon$, which concludes the proof.

\end{proof}

\begin{remark}
From the above proof we know that for any fluid particle $x$ with initial position  $x\in supp\omega^\varepsilon(x,0)$, the two kinds of motions governed by the Euler equation and the regularized system are the same, but the velocities of these two systems may be different, especially for the fluid particles near the boundary.
\end{remark}

\section{$k$ vortices}

Using the same idea, we can study the evolution of $k(k\geq 1)$ blobs of concentrated vorticity in bounded domains.

According to the vortex model, the evolution of $k$ point vortices with vorticity strength $a_i$ and initial position $z_{i0}$ is described by the following Kirchhoff-Routh equations:
\begin{equation}\label{200}
\begin{cases}
\frac{dz_i(t)}{dt}=\sum_{j\neq i}a_jJ\nabla_{z_i}G(z_i(t),z_j(t))-a_iJ\nabla_{z_i} h(z_i(t),z_j(t)),\\
z_i(0)=z_{i0},
\end{cases}
\end{equation}
where $i=1,2,...,k$, $z_{i0}\in D$, and $z_{i0}\neq z_{j0}$ if $i\neq j$. It means that the motion of the vortex located at $z_i(t)$ with vorticity strength $a_i$ is influenced by the other vortices via the term $\sum_{j\neq i}a_jJ\nabla_{z_i}G(z_i(t),z_j(t))$, and the boundary via the term $-a_iJ\nabla_{z_i} h(z_i(t),z_j(t))$.

By the theory of ordinary differential equations, there exists $T>0$ such that \eqref{200} has a unique solution $\{z_i(t)\}_{i=1}^k$ in time interval $[0,T]$ and $z_{i}(t)\neq z_{j}(t)$ for all $t\in[0,T]$ if $i\neq j$. Moreover, since the set $\{z_i(t)|t\in[0,T]\}$ is compact for each $i$, we choose $\rho_0>0$ such that
\begin{equation}
\rho_0<\min_{t\in[0,T],i\neq j}\{|z_i(t)-z_j(t)|,\,\,\, \rho_0<\min_{t\in[0,T],1\leq i\leq k}\{dist(z_i(t),\partial D)\}.
\end{equation}

Now we consider a family of initial data $\omega^\varepsilon(x,0)\in L^\infty(D)$ satisfying
\begin{equation}\label{201}
\omega^\varepsilon(x,0)=\sum_{i=1}^k\omega_i^\varepsilon(x,0),
\end{equation}
\begin{equation}\label{202}
\int_D\omega_i^\varepsilon(x,0)dx=a_i,
\end{equation}
\begin{equation}\label{203}
|\omega_i^\varepsilon(x,0)|\leq M\varepsilon^{-\eta},\,\,\,\,\eta<\frac{8}{3},
\end{equation}
\begin{equation}\label{204}
supp\omega_i^\varepsilon(x,0)\subset B_\varepsilon(z_{i0}),
\end{equation}
where $M$ and $\eta$ are fixed positive numbers.

For fixed $\varepsilon$, there exists a unique weak solution $(\omega^\varepsilon(x,t),\mathbf{v}^\varepsilon(x,t),\Phi^\varepsilon(x,t))$. Obviously,
\begin{equation}
\omega^\varepsilon(x,t)=\omega^\varepsilon(\Phi^\varepsilon(x,-t),0)=\sum_{i=1}^k\omega^\varepsilon_i(\Phi^\varepsilon(x,-t),0).
\end{equation}
For simplicity we write $\omega^\varepsilon_i(x,t)=\omega^\varepsilon_i(\Phi^\varepsilon(x,-t),0)$, thus $\omega^\varepsilon(x,t)=\sum_{i=1}^k\omega^\varepsilon_i(x,t)$.

We have the following result:
\begin{theorem}\label{210}
For any $\delta>0$, there exists $\varepsilon_0>0$ such that, if $\varepsilon<\varepsilon_0$, then
\begin{equation}\label{29}
supp\omega_i^\varepsilon(x,t)\subset B_\delta(z_i(t))
\end{equation}
for all $0\leq t\leq T$, where $z_i(t)$ is the solution of the following system:
\begin{equation}
\begin{cases}
\frac{dz_i(t)}{dt}=\sum_{j\neq i}a_jJ\nabla_{z_i}G(z_i(t),z_j(t))-a_iJ\nabla_{z_i} h(z_i(t),z_j(t)),\\
z_i(0)=z_{i0}.
\end{cases}
\end{equation}
\end{theorem}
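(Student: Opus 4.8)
**

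The plan is to generalize the single-vortex argument from Theorem \ref{33} to the $k$-vortex case by introducing a regularized system that simultaneously desingularizes the boundary force \emph{and} the mutual interactions between distinct vortex blobs. The core difficulty compared to the single-vortex case is that the velocity field felt by the $i$-th blob now contains the Biot--Savart contributions of all the other blobs $\omega_j^\varepsilon$ ($j\neq i$), and these kernels $\Gamma(x,y)=-\frac{1}{2\pi}\ln|x-y|$ become singular precisely when two blobs collide. I would therefore set up a regularization in which, for each fixed $i$, the self-interaction of the $i$-th blob is kept as the singular kernel $\Gamma$ (this is the term that drives the ``spreading'' estimate and must stay intact for the localization lemma to apply), while the interaction with blob $j\neq i$ and the boundary term $h$ are both replaced by smooth, uniformly bounded and uniformly Lipschitz external forces via cutoff functions $\theta,\chi$ supported away from $\partial D$ and away from collision configurations. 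Concretely, I would define for each $i$ an external force $\mathbf{F}_i^\varepsilon(x,t)$ collecting $\sum_{j\neq i}a_j J\nabla_x\bigl(\Gamma(x,y)\chi_j(y)\bigr)$-type smoothed interactions together with the regularized boundary force, engineered so that $\mathbf{F}_i^\varepsilon$ satisfies the uniform bounds \eqref{40}--\eqref{41} of Lemma \ref{444} as long as the blobs remain near their prescribed trajectories $z_i(t)$.

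With this setup, the proof proceeds in the same three conceptual steps as before, but now run simultaneously for all $i=1,\dots,k$. First, for each $i$ I would apply the localization lemma (Lemma \ref{47}) to the $i$-th blob regarded as a single concentrated vortex moving under the external force $\mathbf{F}_i^\varepsilon$: since the self-kernel $\Gamma$ is retained and $\mathbf{F}_i^\varepsilon$ is uniformly bounded and Lipschitz, the spreading estimate $J_1\le CR$, $J_2\le C\varepsilon/R^2 + A(\varepsilon)$ goes through verbatim, yielding $supp\,\bar\omega_i^\varepsilon(\cdot,t)\subset B_{C\varepsilon^\alpha}(m_i^\varepsilon(t))$ with the center of vorticity $m_i^\varepsilon(t)$ staying within $C\varepsilon$ of a regularized trajectory $\bar z_i^\varepsilon(t)$ solving $\frac{d}{dt}\bar z_i^\varepsilon=-\mathbf{F}_i^\varepsilon(\bar z_i^\varepsilon,t)$. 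The key point is that the weaker exponent $\eta<\frac{8}{3}$ in \eqref{203} is exactly what is needed for the $J_2$ estimate to still produce a support radius that vanishes; one must check that the constant $C$ in the localization lemma can be taken uniform in $i$.

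Next, I would pass to the limit $\varepsilon\to 0$ along a subsequence using Arzelà--Ascoli (the trajectories $\bar z_i^\varepsilon$ are equibounded and equi-Lipschitz by the uniform bound on $\mathbf{F}_i^\varepsilon$), obtaining limit curves $\bar z_i(t)$. As in Step 2 of the proof of Theorem \ref{33}, the smoothed forces $\mathbf{F}_i^\varepsilon$ converge uniformly to the genuine Kirchhoff--Routh velocity field $\sum_{j\neq i}a_j J\nabla_{z_i}G(\bar z_i,\bar z_j)-a_iJ\nabla_{z_i}h(\bar z_i,\bar z_i)$, because the blobs $\bar\omega_j^\varepsilon$ concentrate at $\bar z_j(t)$ and the cutoffs are identically $1$ on a neighborhood of the limiting configuration. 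Hence the limit curves satisfy the Kirchhoff--Routh system \eqref{200}, and by uniqueness of its solution $\{z_i(t)\}$ we get $\bar z_i(t)=z_i(t)$ and convergence of the whole family (not just a subsequence). The main obstacle is the \emph{self-consistency} of the a priori assumption underlying the whole scheme: the localization estimate only guarantees the blobs stay near $\bar z_i^\varepsilon$ as long as the cutoffs are active, i.e. as long as the blobs have not collided and have not reached $\partial D$, yet establishing that they do not collide requires already knowing they track the non-colliding curves $z_i(t)$. I would resolve this by a continuation/bootstrap argument on $[0,T]$: since $\rho_0$ is chosen strictly below both the minimal inter-vortex distance and the minimal boundary distance of the limit trajectories, for $\varepsilon$ small the set $\{t:\ dist(supp\,\bar\omega_i^\varepsilon,\partial D)>\tfrac{2}{3}\rho_0 \text{ and } |m_i^\varepsilon-m_j^\varepsilon|>\tfrac{1}{2}\rho_0\ \forall j\neq i\}$ is both open and closed in $[0,T]$ and contains $t=0$, so it is all of $[0,T]$; on this set the regularized system coincides with the true Euler evolution exactly as in Step 4 of Theorem \ref{33}, giving $\omega_i^\varepsilon=\bar\omega_i^\varepsilon$ and the claimed inclusion \eqref{29}.
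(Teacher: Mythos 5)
Your proposal is correct in outline and follows essentially the same strategy as the paper: regularize the boundary force and the mutual interactions, treat them as a uniformly bounded, uniformly Lipschitz external force acting on each blob, run the localization lemma (Lemma \ref{47}) blob by blob, identify the limiting center trajectories with the Kirchhoff--Routh solution, and conclude by uniqueness of the Euler weak solution as in Step 4 of Theorem \ref{33}. The one substantive difference lies in \emph{how} the mutual interaction is regularized, and it matters. Your concrete formula, a cutoff in $y$ of the form $J\nabla_x\bigl(\Gamma(x,y)\chi_j(y)\bigr)$, does not actually remove the singularity of $\Gamma$ on the diagonal $x=y$; consequently your force $\mathbf{F}_i^\varepsilon$ is only bounded and Lipschitz \emph{conditionally}, ``as long as the blobs remain near their prescribed trajectories,'' which is exactly why you are forced into the open--closed continuation argument at the end. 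The paper instead replaces the inter-blob kernel by a truncated logarithm $\ln^{\rho_0}(|x-y|)$, a globally smooth function of $x-y$ coinciding with $\ln|x-y|$ for $|x-y|\geq\frac{\rho_0}{100}$ (see \eqref{214}); this makes the external force \emph{unconditionally} smooth, bounded and Lipschitz on all of $\mathbb{R}^2$, so the regularized system and the localization lemma apply on the whole of $[0,T]$ with no a priori separation hypothesis, and no bootstrap is needed: one simply checks a posteriori that the localized blobs stay at mutual distance $>\frac{\rho_0}{100}$ and away from $\partial D$, so the truncated kernels agree with the true ones along the actual evolution and uniqueness applies. Your continuation argument is a legitimate repair and would work (all the Gr\"onwall-type estimates are forward in time, so they are valid on the maximal good interval, and the localization conclusion is strictly stronger than the separation hypothesis, which closes the loop), but the paper's kernel truncation is the cleaner device and is the natural $k$-vortex analogue of the role played by $\theta$ and $\chi$ for the boundary term; if you keep your cutoff-in-$y$ formula without modifying the kernel near the diagonal, the uniform bounds \eqref{40}--\eqref{41} simply fail and Lemma \ref{444} has no direct analogue, so the kernel modification (or your bootstrap) is not optional.
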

\begin{proof}
Since the proof is essential the same as Theorem \ref{33}, we only show the main idea.
Consider the following regularized system in all of $\mathbb{R}^2$:
\begin{equation}\label{211}
\bar{\omega}^\varepsilon(x,t)=\sum_{i=1}^{k}\bar{\omega}_i^\varepsilon(x,t),
\end{equation}

\begin{equation}\label{212}
\bar{\omega}_i^\varepsilon(x,t)=\omega_i(\bar{\Phi}_i(x,-t),0)
\end{equation}

\begin{equation}\label{213}
\frac{d}{dt}\bar{\Phi}_i^\varepsilon(x,t)=\bar{\mathbf{v}}^\varepsilon_i(\bar{\Phi}^\varepsilon_i(x,t),t),\,\,\, \bar{\Phi}^\varepsilon_i(x,0)=x,\,\forall x\in \mathbb{R}^2,
\end{equation}
\begin{equation}\label{214}
\begin{split}
\bar{\mathbf{v}}^\varepsilon_i(x,t)=J\nabla_x\int_{\mathbb{R}^2}-\frac{1}{2\pi}\ln|x-y|\bar{\omega}^\varepsilon_i(y,t)dy\\
+J\nabla_x\sum_{j\neq i}\int_{\mathbb{R}^2}-\frac{1}{2\pi}\ln^{\rho_0}(|x-y|)\bar{\omega}^\varepsilon_j(y,t)dy
-J\nabla_x\sum_{j=1}^k\int_{\mathbb{R}^2}\bar{h}(x,y)\bar{\omega}^\varepsilon_j(y,t)dt,
\end{split}
\end{equation}
where $\ln^{\rho_0}(|x|)$ is a smooth function of $x$  which coincides with $\ln(|x|)$ if $|x|\geq\frac{\rho_0}{100}$, and $\bar{h}(x,y)=h(x,y)\theta(x)\chi(y)$, where $\theta,\chi$ is defined by \eqref{61},\eqref{62}.

We view the term
\begin{equation}
J\nabla_x\sum_{j\neq i}\int_{\mathbb{R}^2}-\frac{1}{2\pi}\ln^{\rho_0}(|x-y|)\bar{\omega}^\varepsilon_j(y,t)dy
-J\nabla_x\sum_{j=1}^k\int_{\mathbb{R}^2}\bar{h}(x,y)\bar{\omega}^\varepsilon_j(y,t)dt
\end{equation}
as an external force, which is uniformly bounded and satisfies uniform Lipschitz condition. Then by the same argument we can show that $\bar{\omega}_i^\varepsilon$ has localization property, i.e., the support of $\bar{\omega}^\varepsilon_i(\cdot,t)$ is located in a sufficiently small disk centered at $z_i(t)$ if $\varepsilon$ is sufficiently small, but in this case the time evolution of $\bar{\omega}^\varepsilon_i(\cdot,t)$ coincides with the one of ${\omega}^\varepsilon_i(\cdot,t)$, so by uniqueness we have $\bar{\omega}^\varepsilon_i={\omega}^\varepsilon_i$, which concludes the proof.
\end{proof}

\end{document}